\newtheorem{theorem}{Theorem}
\newtheorem{lemma}{Lemma}
\newtheorem{definition}{Definition}
\newcommand*\samethanks[1][\value{footnote}]{\footnotemark[#1]} 
\tikzset{every node/.style={circle}}
\tikzstyle{g0}=[scale=0.5,fill,outer sep=0]
\tikzstyle{g1}=[draw,inner sep=0,outer sep=0]
\title{Exact Counts of $C_{4}$s in Blow-Up Graphs}
\author{S.~Y. Chan\thanks{Deakin University, Geelong, Australia, School of Information Technology, Faculty of Science Engineering \& Built Environment, \underline{Australia}}        
    \and    
    K. Morgan\samethanks
    \and
    J.Ugon\samethanks[1]}
\date{}
\begin{document}
\maketitle

\begin{abstract}
Cycles have many interesting properties and are widely studied in many disciplines. In some areas, maximising the counts of $k$-cycles are of particular interest. A natural candidate for the construction method used to maximise the number of subgraphs $H$ in a graph $G$, is the \emph{blow-up} method. Take a graph $G$ on $n$ vertices and a pattern graph $H$ on $k$ vertices, such that $n\geq k$, the blow-up method involves an iterative process of replacing vertices in $G$ with a copy of the $k$-vertex graph $H$. In this paper, we apply the blow-up method on the family of cycles. We then present the exact counts of cycles of length 4 for using this blow-up method on cycles and generalised theta graphs. 
\end{abstract}

\section{Introduction}
In graph theory, the family of cycles are considered to have very rich structures. Cycles have many interesting properties and are of interest in many disciplines. Some of the many applications of cycles include periodic scheduling, the identification of weak interdependence in ecosystems and to determine chemical pathways in chemical networks \cite{adriaens19,kavitha09}.

In network analysis, cycles are used in modelling and studying communication systems, improve consensus network performance, to investigate faults and reliability and also study the topological features of such networks \cite{zelazo13}. In some cases, counting cycles is used as part of network analysis or message-passing algorithms \cite{karimi13,liu06,safar11}. Some other interesting problems arise in relation to counting $k$-cycles in graphs. For example, counting the number of 4-cycles in a tournament \cite{linial16} and enumerating cycles of a given length \cite{alon97}.

Further, some problems looks into maximising the number of cycles in graphs. There exists a kidney exchange program (KEP), which involves maximising the number of (directed) cycles to maximise the expected number of transplants \cite{alvelos16,biro09,pedrose14}.

In the areas of extremal graph theory, a construction method used to maximise counts of graphs is the \emph{blow-up} method. This method has been applied in graphs to study graph spectra \cite{oliveira14} and even the maximum induced density of graphs \cite{hatami2014}. This method was also applied as an approach to the Caccetta-H\"{a}ggkivst conjecture \cite{bondy97} and Johansson conjecture \cite{johansson00}.

Suppose we have graphs $G$ and $H$ on $n$ and $k$ vertices respectively, such that $n\geq k$. The blow-up method is an iterative process of replacing each vertex in $G$ with a \emph{copies} of $H$. If \emph{all} vertices in $G$ have been replaced with a copy of $H$, this is also known as a \emph{balanced} blow-up of $G$.

In this paper, we are interested in determining the exact number of induced $C_{4}$s in the nested blow-up graph. We will find the exact counts of $C_{4}$s in two different blow-up graphs, one in the nested blow-up of $C_{4}$s and the other in the theta graph $\Theta_{2,2,2}$. We give the associated formulas with respect to the level of blow-up $N$, which is defined in the later section.

\section{Notations and Definitions} 
In this section, we present some basic definitions and notations that are used in this paper. All graphs in this paper are simple. 

A graph $G=(V,E)$ consists of the (finite) vertex set $V$ and edge set $E$, which is a subset of all unordered pair of vertices. The \emph{order} of a graph is the number of vertices, whereas the \emph{size} of a graph is the number of edges. Let $u,v\in V(G)$, we say that $u$ is \emph{adjacent} to $v$ if there exists an edge $\{u,v\}\in E(G)$. We say that the edge $\{u, v\}$ is incident to vertices $u$ and $v$.

\begin{definition}[Graph composition]
Let $G$ and $H$ be simple graphs. The composition of graph $G$ and $H$, denoted $G[H]$, is the graph obtained by replacing each vertex $v\in V(G)$ by the graph $H_{v} \cong H$ and adding an edge between every vertex of $H_{u}$ and every vertex of $H_{v}$ where $\lbrace u, v\rbrace \in E(G).$ 
\end{definition}

\begin{definition}[Nested blow-up graph]
If $H$ is isomorphic to $G$ and each vertex $v\in V(G)$ is labelled $v\in \{ 0,1,2,\ldots,n-1 \}$, then $G_{0}=G$, $G_{1}=G[G]$,$\ldots$, $G_{N}=G[\underbrace{G[G[G[\ldots[G]]]]}_{N-1}]=G[G_{N-1}]$, where at each level $N$, the nested blow-up graph $G_{N}$ can be obtained from $G_{N-1}$ by replacing each vertex in $G$ by a copy of $G_{N-1}$ and replacing each edge by all possible edges between adjacent copies of $G_{N-1}$. We refer to each of such copies $G_{N-1}$ as \textbf{blobs} in our proofs in order to differentiate from other subgraphs in $G_{N}$ that may be isomorphic to $G_{N-1}$. 
\end{definition}

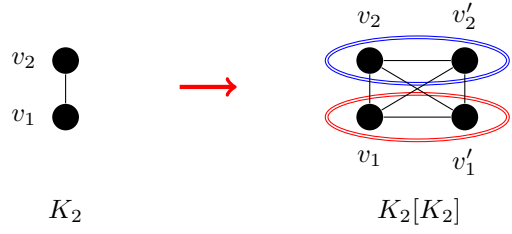
\begin{figure}
    \centering
    \begin{tikzpicture}
        [scale=.5,auto=left,every node/.style={circle,fill=black, minimum size=1em}]
        \node[label=left:$v_{1}$] (n0) at (0,0) {};
        \node[label=left:$v_{2}$] (n1) at (0,1.5) {};
        \node[fill=none] (nn) at (0,-2.5) {$K_{2}$};
        
        \draw (n0) edge (n1);
        
        \draw[ultra thick,red, ->] (3,0.8) -- (4.5,0.8);
        
        \begin{scope}[xshift=8cm]
        \node[label=below:$v_{1}$] (n0) at (0,0) {};
        \node[label=above:$v_{2}$] (n1) at (0,1.5) {};
        \node[label=below:$v_{1}'$] (n2) at (2.5,0) {};
        \node[label=above:$v_{2}'$] (n3) at (2.5,1.5) {};
        \node[fill=none] (nn1) at (1.3,-2.5) {$K_{2}[K_{2}]$};
        \node[draw=red,fill=none,double,fit=(n0) (n2) ,inner sep=1pt,ellipse] (tmp) {};
        \node[draw=blue,fill=none,double,fit=(n1) (n3) ,inner sep=1pt,ellipse] (tmp) {};
        \draw (n0) edge (n1);
        \draw (n1) edge (n2);
        \draw (n2) edge (n3);
        \draw (n0) edge (n3);
        \draw (n0) edge (n2);
        \draw (n1) edge (n3);
        \end{scope}
    \end{tikzpicture}
    \caption{An example of the nested blow-up of $K_{2}$. In $K_{2}[K_{2}]$ there are two blobs shown in blue and red, but 6 subgraphs that are isomorphic to $K_{2}$.  }
    \label{fig:my_label}
\end{figure}

\begin{figure}[H]
\centering
\begin{tikzpicture}[scale=0.8,auto=left,every node/.style={circle,fill=black}]
    \node (n0) at (0,0) {};
    \node (n1) at (1,-1) {};
    \node (n2) at (-1,-1) {};
    \node (n3) at (0,-2) {};
    
    \draw (n0) -- (n1) -- (n3) -- (n2) -- (n0);
    
    \draw[ultra thick,red, ->] (3,-1) -- (4,-1);

\begin{scope}[xshift=8cm, remember picture,
  inner/.style={scale=.5,circle,draw=black,fill=black,thick},
  outer/.style={scale=0.6,circle,draw=black,fill=none,thick}
  ]
  \node[outer,draw=black] (A) at (0,-2) {
    \begin{tikzpicture}
      \node [inner,draw=blue] (ai)  {};
      \node [inner,draw=blue,below=of ai] (aii) {};
      \node [inner,draw=blue,right=of aii] (aiii) {};
      \node [inner,draw=blue,right=of ai] (aiv) {};
      \draw[red, ultra thick] (ai) -- (aii) -- (aiii) -- (aiv) -- (ai);
    \end{tikzpicture}
  };
  \node[outer,draw=black] (B) at (-2,0) {
    \begin{tikzpicture}
      \node [inner,draw=blue] (bi)  {};
      \node [inner,draw=blue,below=of bi] (bii) {};
      \node [inner,draw=blue,right=of bii] (biii) {};
      \node [inner,draw=blue,right=of bi] (biv) {};
      \draw[red, ultra thick] (bi) -- (bii) -- (biii) -- (biv) -- (bi);
    \end{tikzpicture}
  };
    \node[outer,draw=black] (C) at (0,2) {
    \begin{tikzpicture}
        \node [inner,draw=blue] (ci)  {};
      \node [inner,draw=blue,below=of ci] (cii) {};
      \node [inner,draw=blue,right=of cii] (ciii) {};
      \node [inner,draw=blue,right=of ci] (civ) {};
      \draw[red, ultra thick] (ci) -- (cii) -- (ciii) -- (civ) -- (ci);
    \end{tikzpicture}
  };
     \node[outer,draw=black] (D) at (2,0) {
    \begin{tikzpicture}
        \node [inner,draw=blue] (di)  {};
      \node [inner,draw=blue,below=of di] (dii) {};
      \node [inner,draw=blue,right=of dii] (diii) {};
      \node [inner,draw=blue,right=of di] (div) {};
      \draw[red, ultra thick] (di) -- (dii) -- (diii) -- (div) -- (di);
    \end{tikzpicture}
    };
  \draw[line width=5mm] (A) -- (B) -- (C) -- (D) -- (A);
  \end{scope}
\end{tikzpicture}
\caption{Example of a nested blow-up graph of $C_{4}$.}
\end{figure}
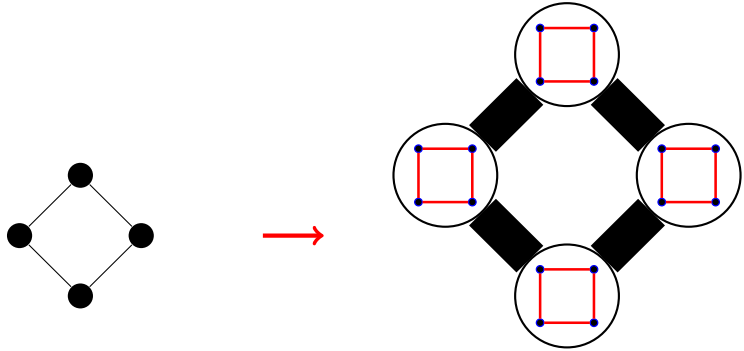

We say that blob $i$ and blob $j$ in $G_{N}$ are adjacent, denoted $i\circledast j$, if $\{v_{i},v_{j}\}\in E(G)$, for $i\in [0,n),j\in[0,n)$. The vertices of $G_{N}$ are $\lbrace 0, 1, \ldots, n|V(G_{N-1})|-1\rbrace$, where $|V(G)|=n$. The edges are $\lbrace \lbrace u+i\times n^{N} ,v + i \times n^{N} \rbrace : i\in[0,n), \lbrace u,v\rbrace \in E(G_{N-1}) \rbrace  \cup \lbrace \lbrace u + i \times n^{N}, v+i+j\times n^{N}\rbrace \pmod{|V(G_{N})|}\rbrace : i\in [0,n), j\in [0,n), u,v\in V(G), i\circledast j  \rbrace$. There exists copies of subgraphs that are isomorphic to $G$ within $G_{N-1}$ and also between each blob.

\section{Exact Counts}
In this section, we show the exact counts of $C_{4}$ for the nested blow-up graph of the graph $G=C_{4}$ and $G=\Theta_{2,2,2}$. Here, we denote $T_{N}$ as the number of induced subgraphs isomorphic to $C_{4}$ and $n_{N}$ as the number of vertices at level $N$ of the nested blow-up $G_{N}$ respectively (i.e.,\ $T_{0}=1$ and $n_{0}=4$), where $G_{0}=C_{4}$. We also denote the number of non-edges of $C_{4}$ as $m_{N}^{c}$, where $m_{0}^{c}=2$. Recall that a \emph{blob} is the set of $G_{N-1}$
vertices in $G_{N}$ from the blow-up of a vertex $v$ in $G=G_{0}$. We show the following lemma:

\begin{lemma}
The number of non-edges $m^{c}_{N}$ in the nested blow-up graph $G_{N}$ is
\begin{equation*}
       m_{N}^{c} = \binom{4^{N+1}}{2}-4^{N+1}\mathlarger{\sum}_{i=0}^{N}4^{i} = \dfrac{4^{N+1}(4^{N+1}-1)}{6}, N \in \mathbb{N}\cup \{0\}.
\end{equation*}
\label{lemma1}
\end{lemma}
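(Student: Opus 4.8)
The plan is to prove the first equality by a direct edge count, using the fact that the nested blow-up of $C_4$ is vertex-transitive (hence regular), and then to obtain the closed form on the right by collapsing a geometric series. First I would record the two auxiliary recursions that the construction $G_N = C_4[G_{N-1}]$ forces. Since $C_4$ has $4$ vertices, replacing each by a copy of $G_{N-1}$ gives $n_N = 4n_{N-1}$, and with $n_0 = 4$ this yields $n_N = 4^{N+1}$; this already identifies the term $\binom{4^{N+1}}{2}$ as the total number of unordered vertex pairs of $G_N$. Next I would show $G_N$ is regular: a vertex $x$ in the blob corresponding to $v \in V(C_4)$ is adjacent exactly to its neighbours inside that blob (a copy of $G_{N-1}$, contributing $\deg_{G_{N-1}}(x)$) together with \emph{every} vertex of the two blobs corresponding to the two $C_4$-neighbours of $v$ (contributing $2n_{N-1}$). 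So the common degree $d_N$ satisfies $d_N = d_{N-1} + 2n_{N-1} = d_{N-1} + 2\cdot 4^{N}$ with $d_0 = 2$, and unwinding gives $d_N = 2 + \sum_{i=1}^{N} 2\cdot 4^{i} = 2\sum_{i=0}^{N} 4^{i}$.

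With regularity established, the edge count is $e_N = \tfrac12 n_N d_N = \tfrac12\cdot 4^{N+1}\cdot 2\sum_{i=0}^{N}4^{i} = 4^{N+1}\sum_{i=0}^{N}4^{i}$, and hence $m_N^{c} = \binom{4^{N+1}}{2} - e_N = \binom{4^{N+1}}{2} - 4^{N+1}\sum_{i=0}^{N}4^{i}$, which is the first claimed expression. For the closed form I would substitute $\sum_{i=0}^{N}4^{i} = \tfrac{4^{N+1}-1}{3}$ and simplify: $\binom{4^{N+1}}{2} - 4^{N+1}\cdot\tfrac{4^{N+1}-1}{3} = 4^{N+1}(4^{N+1}-1)\bigl(\tfrac12 - \tfrac13\bigr) = \tfrac{4^{N+1}(4^{N+1}-1)}{6}$, and I would check the base case against $m_0^{c} = 2$ as a sanity test.

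I do not anticipate a genuine obstacle; the one point needing care is the bookkeeping behind regularity and the edge count — namely that $E(G_N)$ partitions cleanly into the $4$ "inside-blob" copies of $E(G_{N-1})$ together with the $4$ complete bipartite graphs between adjacent blobs (one per edge of $C_4$), with no pair of vertices counted twice. An alternative route that sidesteps regularity is to count non-edges directly from the same partition: the inside-blob non-edges contribute $4m_{N-1}^{c}$, and the $\binom{4}{2} - 4 = 2$ non-adjacent blob pairs contribute $n_{N-1}^2 = 16^{N}$ each, giving $m_N^{c} = 4m_{N-1}^{c} + 2\cdot 16^{N}$ with $m_0^{c} = 2$; an easy induction then verifies that $\tfrac{4^{N+1}(4^{N+1}-1)}{6}$ satisfies this recurrence. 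I would present whichever of the two derivations is shorter.
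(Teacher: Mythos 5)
Your proposal is correct, and your primary derivation takes a genuinely different route from the paper. The paper proves the identity by induction on $N$: it assumes the formula for $m_N^c$, recovers $|E(G_N)|=\binom{4^{N+1}}{2}-m_N^c$, and then counts the non-edges of $G_{N+1}$ as all pairs minus the four within-blob edge sets minus the four complete bipartite graphs of size $(4^{N+1})^2$ between adjacent blobs. You instead compute $|E(G_N)|$ directly and non-inductively: the degree recursion $d_N=d_{N-1}+2\cdot 4^{N}$ with $d_0=2$ gives $d_N=2\sum_{i=0}^{N}4^{i}$, and the handshake lemma yields $e_N=4^{N+1}\sum_{i=0}^{N}4^{i}$ in one line, after which the closed form follows from the geometric series exactly as in the paper. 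What your approach buys is a cleaner statement of \emph{why} the middle expression has that shape (it is literally $\binom{n_N}{2}-\tfrac12 n_N d_N$ for a regular graph), at the cost of having to justify regularity of the lexicographic power; your degree recursion does this correctly, so vertex-transitivity is not actually needed. Your sketched alternative, the non-edge recurrence $m_N^c=4m_{N-1}^c+2\cdot 16^{N}$ coming from the $\binom{4}{2}-4=2$ non-adjacent blob pairs, is essentially the complement of the paper's induction and verifies numerically ($m_1^c=4\cdot 2+32=40=\tfrac{16\cdot 15}{6}$), so either of your two derivations is a valid substitute for the paper's argument.
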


\begin{proof}
We prove the equation from Lemma \ref{lemma1} by induction. \\
\textbf{Base case}: For $N=0$, the number of non-edges is  $m^{c}_{0}=\binom{4}{2}-4\mathlarger{\sum}^{0}_{i=0}4^{0}=6-4=\dfrac{4^{1}(4^{1}-1)}{6}=2$, which is precisely the number of non-edges in  $C_{4}$. \\
\noindent
Assume the induction hypothesis that for a particular $N$, the case $n = N$ holds, that is:
\begin{align*}
m^{c}_{N} &=\binom{4^{N+1}}{2}-4^{N+1}\mathlarger{\sum}^{N}_{i=0} 4^{i} 
= \dfrac{4^{N+1}(4^{N+1}-1)}{6}. \\
\intertext{The number of non-edges in $G_{N+1}$ is:}
m^{c}_{N+1} &=\binom{4^{N+2}}{2}-4\times |E(G_{N})|-4\times \text{edges between pairs of blobs}\\
\intertext{We obtain $|E(G_{N})|$ using $m_{N}^{c}$ such that $|E(G_{N})|=\binom{4^{N+1}}{2}-m_{N}^{c}$. Thus,}
 m_{N+1}^{c} &= \binom{4^{N+2}}{2} - 4\cdot 4^{N+1} \mathlarger{\sum}^{N}_{i=0} 4^{i} - 4\cdot(4^{N+1})^{2} \\
 &= \binom{4^{N+2}}{2} - 4^{N+2}\mathlarger{\sum}_{i=0}^{N} 4^{i} - 4^{N+2}4^{N+1} \\
 &= \binom{4^{N+2}}{2}-4^{N+2}\mathlarger{\sum}_{i=0}^{N+1} 4^{i}.
\end{align*}
Since both the base case and inductive step have been proven as true, thus by mathematical induction $m^{c}_{N}$ holds for all $N$.
\end{proof}

\noindent
Thus, we state the following theorem.

\begin{theorem}
The nested blow-up graph $G_{N}$, $N\geq 0$, of a $C_{4}$ has precisely $T_{N}= \frac{8\times 4^{N}}{5670}\times(1280\times 4^{3N}+672\times 4^{2N}+105\times 4^{N}-713)$ induced subgraphs isomorphic to $C_{4}$.
\end{theorem}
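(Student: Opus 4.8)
The plan is to set up a recurrence for $T_N$ in terms of $T_{N-1}$, $n_{N-1}$, and $m^c_{N-1}$, using the structure of $G_N$ as a composition $C_4[G_{N-1}]$, and then solve the recurrence in closed form. I would classify every induced $C_4$ in $G_N$ according to how its four vertices are distributed among the four blobs $B_0, B_1, B_2, B_3$ (where $B_i$ is the copy of $G_{N-1}$ replacing vertex $v_i$ of $C_4$, and $B_i$ is joined completely to $B_{i\pm 1}$ but not to $B_{i+2}$). The possible distribution patterns of a $4$-vertex induced subgraph across the blobs are: (a) all four vertices in a single blob; (b) three in one blob, one in an adjacent blob; (c) three in one blob, one in the opposite blob; (d) two in one blob, two in an adjacent blob; (e) two in one blob, two in the opposite blob; (f) two in one blob, one in each of the two adjacent blobs; (g) two in one blob, one in an adjacent blob and one in the opposite blob — and so on through the fully spread-out cases (one vertex in each blob, or three blobs used). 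For each pattern I would count, using the complete/empty bipartite structure between blobs, exactly which configurations induce a $C_4$, expressing the count via the number of induced $C_4$'s, induced paths $P_3$, induced non-edges, induced edges, and vertices within a single $G_{N-1}$.

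The key intermediate step is therefore to also track, alongside $T_N$, the number of \emph{induced} copies of the other small graphs on $\le 4$ vertices that feed into the recurrence: in practice the relevant auxiliary quantities are $n_N = 4^{N+1}$ (known), $m^c_N$ (known by Lemma \ref{lemma1}), the number of edges $|E(G_N)| = \binom{4^{N+1}}{2} - m^c_N$, and the number of induced $P_3$'s (paths on three vertices) and induced $\overline{P_3}$'s inside $G_{N-1}$. I would derive a short auxiliary recurrence for the number of induced $P_3$'s, $Q_N$ say, again by splitting on the blob distribution of the three vertices; this is routine since $P_3$ has only a handful of placements. Substituting $n_{N-1}$, $m^c_{N-1}$, $|E(G_{N-1})|$, $Q_{N-1}$ and $T_{N-1}$ into the case analysis yields a linear recurrence of the shape
\begin{equation*}
T_N = 4\,T_{N-1} + (\text{polynomial in } 4^{N}),
\end{equation*}
because exactly four of the blobs each contribute a faithful copy of $G_{N-1}$ (pattern (a)), while every other pattern contributes a term that is polynomial in $4^{N}$ (degree at most $3N$-ish once expanded, matching the $4^{3N}$ leading term in the claimed formula).

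Finally I would solve this recurrence. The homogeneous part gives $c\cdot 4^{N}$, and the inhomogeneous part, being a combination of $4^{2N}$, $4^{3N}$, $4^{4N}$ and constants, contributes a particular solution of the same form with denominators coming from $4^k - 4 = 4(4^{k-1}-1)$ for $k = 2,3,4$; clearing these denominators produces the $5670 = 2\cdot 3^4 \cdot 5 \cdot 7$ in the statement. Pinning down the constant $c$ uses the initial value $T_0 = 1$. The main obstacle I anticipate is the bookkeeping in the case analysis — correctly enumerating, for each of the roughly ten blob-distribution patterns, which vertex configurations actually induce a $C_4$ (as opposed to a $C_4$ plus chord, i.e.\ $K_4$ minus an edge, or a larger/smaller structure), and making sure each induced $C_4$ of $G_N$ is counted exactly once with the right multiplicity coming from the automorphisms of $C_4$ acting on the four blob roles. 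Once that table is correct, deriving and solving the recurrence is mechanical, and I would close by verifying the formula against small cases ($N=0$ giving $T_0 = 1$, and ideally $N=1$ by direct count in $C_4[C_4]$) as a sanity check.
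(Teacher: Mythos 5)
Your proposal follows essentially the same route as the paper: classify each induced $C_{4}$ of $G_{N}=C_{4}[G_{N-1}]$ by how its vertices are distributed over the four blobs, obtain the recurrence $T_{N}=4T_{N-1}+(4^{N})^{4}+4\,m^{c}_{N-1}(4^{N})^{2}+4\,(m^{c}_{N-1})^{2}$, and solve it using the closed form for $m^{c}_{N}$ from Lemma \ref{lemma1}. The only real deviation is that the auxiliary induced-$P_{3}$ counts you plan to track are unnecessary: in every $3{+}1$ split the singleton vertex has degree $0$ or $3$, so no such configuration induces a $C_{4}$, and the whole case analysis closes using only $T_{N-1}$, $m^{c}_{N-1}$ and $n_{N-1}$.
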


\begin{proof}
First, we will show that,
$$T_{N}=\begin{cases}4(T_{N-1})+(4^{N})^{4}+4\times m_{N-1}^{c}\times (4^{N})^{2}+4\times (m_{N-1}^{c})^{2}, &n>0\\
1 &n=0.\end{cases}$$ 
Since $G_{0}=C_{4}$, we have $T_{0}=1$.  We show that we can obtain $T_{N}$ from $T_{N-1}$ and prove each term from $T_{N}$ respectively. 

\begin{figure}[H]
\centering
\begin{tikzpicture}[scale=1.2,auto=left,every node/.style={circle,draw=black}]
    \node[scale=1.2,line width=1mm] (n0) at (0,0) {$B_{1}$};
    \node[scale=1.2,line width=1mm] (n1) at (1,-1) {$B_{4}$};
    \node[scale=1.2,line width=1mm] (n2) at (-1,-1) {$B_{2}$};
    \node[scale=1.2,line width=1mm] (n3) at (0,-2) {$B_{3}$};
    
    \draw[thick] (n0) -- (n1) -- (n3) -- (n2) -- (n0);
\end{tikzpicture}
\caption{Illustration of $G_{N}$ with four blobs labelled $B_{1},B_{2},B_{3}$ and $B_{4}$.}
\label{C4blobs}
\end{figure}
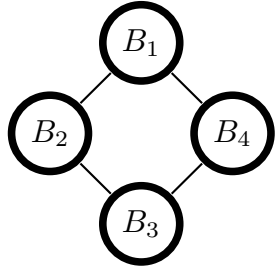

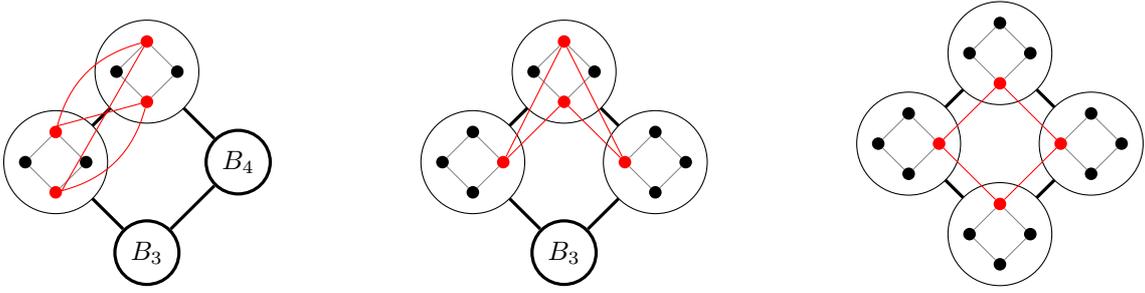
\begin{figure}[H]
\centering
  \begin{tikzpicture}[scale=0.4]
    \begin{scope}
      \node[g0,fill=red,name=1] at (1,0) {};
      \node[g0,name=2] at (0,1) {};
      \node[g0,fill=red,name=3] at (1,2) {};
      \node[g0,,name=4] at (2,1) {};
      \node[g1,fit=(1)(2)(3)(4)] (B1) {};
      \draw[very thin,gray] (1)--(2) -- (3) -- (4) -- (1);
    \end{scope}
    \begin{scope}[shift={(-3,-3)}]
      \node[g0,fill=red,name=5] at (1,0) {};
      \node[g0,name=6] at (0,1) {};
      \node[g0,fill=red,name=7] at (1,2) {};
      \node[g0,name=8] at (2,1) {};
      \node[g1,fit=(5)(6)(7)(8)] (B2) {};
      \draw[very thin,gray] (5)--(6) -- (7) -- (8) -- (5);
    \end{scope}
    \node[draw,very thick,name=B4] at (4,-2) {$B_4$};
    \node[draw,very thick,name=B3] at (1,-5) {$B_3$};
    \draw[very thick] (B1) -- (B2) -- (B3) -- (B4) -- (B1);
    \draw[red] (1) to[bend left] (5) -- (3) to[bend right] (7) -- (1);
  \end{tikzpicture}\hfill
  \begin{tikzpicture}[scale=0.4]
    \begin{scope}
      \node[g0,fill=red,name=1] at (1,0) {};
      \node[g0,name=2] at (0,1) {};
      \node[g0,fill=red,name=3] at (1,2) {};
      \node[g0,,name=4] at (2,1) {};
      \node[g1,fit=(1)(2)(3)(4)] (B1) {};
      \draw[very thin,gray] (1)--(2) -- (3) -- (4) -- (1);
    \end{scope}
    \begin{scope}[shift={(-3,-3)}]
      \node[g0,name=5] at (1,0) {};
      \node[g0,name=6] at (0,1) {};
      \node[g0,name=7] at (1,2) {};
      \node[g0,fill=red,name=8] at (2,1) {};
      \node[g1,fit=(5)(6)(7)(8)] (B2) {};
      \draw[very thin,gray] (5)--(6) -- (7) -- (8) -- (5);
    \end{scope}
    \begin{scope}[shift={(3,-3)}]
      \node[g0,name=9] at (1,0) {};
      \node[g0,fill=red,name=10] at (0,1) {};
      \node[g0,name=11] at (1,2) {};
      \node[g0,,name=12] at (2,1) {};
      \node[g1,fit=(9)(10)(11)(12)] (B4) {};
      \draw[very thin,gray] (9)--(10) -- (11) -- (12) -- (9);
    \end{scope}
    \node[draw,very thick,name=B3] at (1,-5) {$B_3$};
    \draw[very thick] (B1) -- (B2) -- (B3) -- (B4) -- (B1);
    \draw[red] (1) to (8) -- (3) to (10) -- (1);
  \end{tikzpicture}\hfill
  \begin{tikzpicture}[scale=0.4]
    \begin{scope}
      \node[g0,fill=red,name=1] at (1,0) {};
      \node[g0,name=2] at (0,1) {};
      \node[g0,name=3] at (1,2) {};
      \node[g0,name=4] at (2,1) {};
      \node[g1,fit=(1)(2)(3)(4)] (B1) {};
      \draw[very thin,gray] (1)--(2) -- (3) -- (4) -- (1);
    \end{scope}
    \begin{scope}[shift={(-3,-3)}]
      \node[g0,name=5] at (1,0) {};
      \node[g0,name=6] at (0,1) {};
      \node[g0,name=7] at (1,2) {};
      \node[g0,fill=red,name=8] at (2,1) {};
      \node[g1,fit=(5)(6)(7)(8)] (B2) {};
      \draw[very thin,gray] (5)--(6) -- (7) -- (8) -- (5);
    \end{scope}
    \begin{scope}[shift={(3,-3)}]
      \node[g0,name=9] at (1,0) {};
      \node[g0,fill=red,name=10] at (0,1) {};
      \node[g0,name=11] at (1,2) {};
      \node[g0,,name=12] at (2,1) {};
      \node[g1,fit=(9)(10)(11)(12)] (B4) {};
      \draw[very thin,gray] (9)--(10) -- (11) -- (12) -- (9);
    \end{scope}
    \begin{scope}[shift={(0,-6)}]
      \node[g0,name=13] at (1,0) {};
      \node[g0,name=14] at (0,1) {};
      \node[g0,fill=red,name=15] at (1,2) {};
      \node[g0,name=16] at (2,1) {};
      \node[g1,fit=(13)(14)(15)(16)] (B3) {};
      \draw[very thin,gray] (13)--(14) -- (15) -- (16) -- (13);
    \end{scope}
    \draw[very thick] (B1) -- (B2) -- (B3) -- (B4) -- (B1);
    \draw[red] (1) to (8) -- (15) to (10) -- (1);
  \end{tikzpicture}

\caption{[Left to right] Illustration of ways to obtain $C_{4}$ from either 2, 3, or all 4 blobs respectively. The selected vertices are shown in red.}
\label{C4ways}
\end{figure}

We know that $G_{N}$ has four blobs (see Figure \ref{C4blobs}) isomorphic to $G_{N-1}$.  Each of these blobs has $T_{N-1}$ copies of $C_{4}$, which contributes to $4T_{N-1}$ induced copies of $C_{4}$. Thus giving the first term $4T_{N-1}$. 

For simplicity of the remainder of the proof, we use Figure \ref{C4ways} to show the multiple ways of obtaining an induced $C_{4}$ from either 2, 3 or all 4 blobs. Example of how vertices can be chosen are shown in red. 

We select a vertex from each of the 4 blobs that are isomorphic to $G_{N-1}$ in $G_{N}$ which results in an induced $C_{4}$. There are $4^{N}$ vertices in each $G_{N-1}$ which gives $(4^{N})^{4}$ choices. This results in the second term $(4^{N})^{4}$. 

For the third term $4\times m^{c}_{N-1}\times (4^{N})^{2}$, we choose 1 blob that is isomorphic to $G_{N-1}$. We choose a non-edge from this blob. Then, we select two adjacent blobs and from each blob we select a vertex. There are $(4^{N})$ vertices in a blob. As we choose a different vertex   from each of the 2 blobs, there are $(4^N)^2$ choices of vertices, as well as $4\times m^{c}_{n-1}$ non-edges, that can be chosen from the third blob. 

Lastly, we choose 2 adjacent blobs and a non-edge from each. There are 4 choices for adjacent blobs in the blow-up of $C_{4}$. Thus, resulting in $4\times (m^{c}_{N-1})^2$ choices.

We now expand and simplify $T_{N}$ to find the recurrence relation. 

\noindent
Expanding the first few terms, we obtain:
\begin{align*}
    T_{0}= {} & 1 \\
    T_{1}= {} & 4(T_{0})+4^{4}+4\times m_{0}^{c}\times 4^{2}+4\times (m_{0}^{c})^{2} 
         = {}  4 + 4^{4} +4^{3}\times m_{0}^{c}+4\times (m_{0}^{c})^{2} \\
         = {} & 4^{1} \times \sum_{i=0}^{1} 4^{3i} + \mathlarger{\sum}_{i=1}^{1} (4^{1+i+1}\times m_{i-1}^{c})+\mathlarger{\sum}_{i=1}^{1} (4^{i}\times (m_{1-i}^{c})^{2}) \\
    T_{2}= {} & 4(T_{1})+4^{8}+4\times m_{1}^{c}\times 4^{4} +4\times (m_{1}^{c})^{2} \\
   = {} &  4\left (4^{1} \times \sum_{i=0}^{1} 4^{3i} + \mathlarger{\sum}_{i=1}^{1} (4^{1+i+1}\times m_{i-1}^{c})+\mathlarger{\sum}_{i=1}^{1} (4^{i}\times (m_{1-i}^{c})^{2})\right) +4^{8}+4\times m_{1}^{c}\times 4^{4} +4\times(m_{1}^{c})^{2}\\
     = {} &  \left (4^{2} \times \sum_{i=0}^{1} 4^{3i} +4^{8}\right)+ \left(4\mathlarger{\sum}_{i=1}^{1} (4^{1+i+1}\times m_{i-1}^{c})
     +4\times m_{1}^{c}\times 4^{4}\right)
     +\left(4\mathlarger{\sum}_{i=1}^{1} (4^{i}\times (m_{1-i}^{c})^{2})  +4\times(m_{1}^{c})^{2}\right )\\
        = {} & 4^{2} \times \sum_{i=0}^{2} 4^{3i} + \mathlarger{\sum}_{i=1}^{2} (4^{2+i+1}\times m_{i-1}^{c})+\mathlarger{\sum}_{i=1}^{2} (4^{i}\times (m_{2-i}^{c})^{2}) \\
    T_{3}= {} & 4(T_{2})+4^{12}+4\times m_{2}^{c}\times 4^{6}+4\times (m_{2}^{c})^{2} \\
        = {} & 4\left(4^{2} \times \sum_{i=0}^{2} 4^{3i} + \mathlarger{\sum}_{i=1}^{2} (4^{2+i+1}\times m_{i-1}^{c})+\mathlarger{\sum}_{i=1}^{2} (4^{i}\times (m_{2-i}^{c})^{2})\right)+4^{12}+4\times m_{2}^{c}\times 4^{6}+4\times (m_{2}^{c})^{2} \\
        = {} & \left(4^{3} \times \sum_{i=0}^{2} 4^{3i}+4^{12}\right) + \left(4\mathlarger{\sum}_{i=1}^{2} (4^{2+i+1}\times m_{i-1}^{c})+4\times m_{2}^{c}\times 4^{6}\right)+\left(4\mathlarger{\sum}_{i=1}^{2} (4^{i}\times (m_{2-i}^{c})^{2})+4\times (m_{2}^{c})^{2}\right) \\
        = {} & 4^{3} \times \sum_{i=0}^{3} 4^{3i} + \mathlarger{\sum}_{i=1}^{3} (4^{3+i+1}\times m_{i-1}^{c})+\mathlarger{\sum}_{i=1}^{3} (4^{i}\times (m_{3-i}^{c})^{2}) \\
        \vdots \\
      T_{N} = {} & \underbrace{4^{N} \times \sum_{i=0}^{N} 4^{3i}}_{Q_{N}} + \underbrace{\mathlarger{\sum}_{i=1}^{N} (4^{N+i+1}\times m_{i-1}^{c})}_{R_{N}}+\underbrace{\mathlarger{\sum}_{i=1}^{N} (4^{i}\times (m_{N-i}^{c})^{2})}_{S_{N}}
\end{align*}
\noindent
We simplify for each $Q_{N},R_{N}$ and $S_{N}$. \\

\noindent
Simplifying using geometric sum,
\begin{align}
    Q_{N} 
          &= 4^{N}\times \mathlarger{\sum}_{i=0}^{N} 4^{3i} 
   = 4^{N}\times \dfrac{(4^{3(N+1)}-1)}{4^{3}-1} 
         = \dfrac{4^{N}}{63}(64\times4^{3N}-1). 
\intertext{Using Lemma \ref{lemma1},}
    R_{N} 
        &= \mathlarger{\sum}_{i=1}^{N} (4^{N+i+1}\times m_{i-1}^{c}) \nonumber \\
   &= \mathlarger{\sum}_{i=1}^{N} \left(\frac{4^{N+i+1}}{6}\times\dfrac{4^{i}(4^{i}-1)}{6}\right) 
       = 4^{N+1}\times \mathlarger{\sum}_{i=1}^{N} \dfrac{4^{3i}-4^{2i}}{6} 
        = \dfrac{4^{N+1}}{6}\times\left(\mathlarger{\sum}_{i=1}^{N} 4^{3i}-\mathlarger{\sum}_{i=1}^{N} 4^{2i}\right). \nonumber\\
\intertext{Again, simplify using geometric sum,}
    R_{N} &= \dfrac{4^{N+1}}{6}\left(\dfrac{4^{3}(4^{3N}-1)}{4^{3}-1}-\dfrac{4^{2}(4^{2N}-1)}{4^{2}-1}\right) \nonumber \\
        &
      = \dfrac{4^{N+1}}{1890}(320\times(4^{3N}-1)-336\times(4^{2N}-1)) \nonumber \\
        &= \dfrac{4^{N+1}}{1890}(320\times 4^{3N}-336\times 4^{2N}+16).
\intertext{Lastly,}
    S_{N} 
    &= \mathlarger{\sum}_{i=1}^{N} (4^{i}\times (m_{N-i}^{c})^{2}) \nonumber \\
\intertext{By Lemma \ref{lemma1},} 
    S_{N} &= \mathlarger{\sum}_{i=1}^{N} \left(4^{i}\times \left(\frac{4^{N-i+1}(4^{N-i+1}-1)}{6}\right)^{2}\right) \nonumber \\
         &= \mathlarger{\sum}_{i=1}^{N} \left(4^{i}\times \frac{4^{2N-2i+2}\left(
     4^{N-i+1}-1\right)^{2}}{36}\right) \nonumber \\
     &= \dfrac{4^{N+1}}{9}\mathlarger{\sum}_{i=1}^{N}  4^{N-i}\left(4^{2N-2i+2}-2\times 4^{N-i+1}+1\right) \nonumber \\
    &= \dfrac{4^{N+1}}{9}\left( \mathlarger{\sum}_{i=1}^{N}  4^{3N-3i+2}-2\times \mathlarger{\sum}_{i=1}^{N}  4^{2N-2i+1}+\mathlarger{\sum}_{i=1}^{N}  4^{N-i}\right) \nonumber \\
     &= \dfrac{4^{N+1}}{9}\left(16\times \mathlarger{\sum}_{i=1}^{N}  4^{3(N-i)}-8\times\mathlarger{\sum}_{i=1}^{N}  4^{2(N-i)}+\mathlarger{\sum}_{i=1}^{N}  4^{N-i}\right). \nonumber \\
\intertext{Simplify using geometric sum,}
     S_{N} &= \dfrac{4^{N+1}}{9}\left(16\times   \frac{(4^{3N}-1)}{63}-8\times \frac{(4^{2N}-1)}{15}+\frac{(4^{N}-1)}{3}\right) \nonumber\\
    &= \frac{4^{N+1}}{2835}\left(80\times (4^{3N}-1)-168 \times (4^{2N}-1)+105\times (4^{N}-1)\right) \nonumber \\ 
     &= \frac{4^{N+1}}{2835}\left(80\times 4^{3N}-168\times 4^{2N}+ 105\times 4^{N} -17\right). 
\end{align}
\noindent
Thus,
\begin{align*}
 T_{N}= {} & \dfrac{4^{N}}{63}(64\times4^{3N}-1)+\dfrac{4^{N+1}}{1890}(320\times 4^{3N}-336\times 4^{2N}+16) \\
 & +\frac{4^{N+1}}{2835}\left(80\times4^{3N}-168\times4^{2N}+105\times 
     4^{N} -17\right) \\
     = {} & \dfrac{ 4^{N}}{5670}(10240\times 4^{3N}-5376\times 4^{2N}+840\times 4^{N}-34).
\end{align*}
\end{proof}

\subsection{Counting $C_{4}$ in the theta graph $\Theta_{2,2,2}$}
In this section, we construct the nested blow-up graph of the theta graph $\Theta_{2,2,2}$ by replacing each vertex $v_{i}$ in $\Theta_{2,2,2}$ with a $\Theta_{2,2,2}$. 

Recall that the $N$ level nested blow-up of a graph $G$ is denoted $G_{N}$. We compute the number of induced $C_{4}$ in $\Theta_{2,2,2}$. We denote $T_{N}$ as the number of induced $C_{4}$s and $n_{N}$ as the number of distinct vertices in $G_{N}$ respectively. Figure \ref{fig:Theta} gives the theta graph $\Theta_{2,2,2}$.

\begin{figure}[H]
\centering
\captionsetup{justification=centering} 
\begin{tikzpicture}
  [scale=.5,auto=left,every node/.style={circle,fill=black}]
   \node (n0) at (2,4) {};
   \node (n1) at (4,6) {};
   \node (n2) at (4,4) {};
   \node (n3) at (4,2) {};
   \node (n4) at (6,4) {};
   
   \foreach \from/\to in {n0/n1,n0/n2,n0/n3,n4/n1,n4/n2,n4/n3}
   \draw (\from) -- (\to);
\end{tikzpicture}
\caption{Illustration of theta graph $\Theta_{2,2,2}$.} 
\label{fig:Theta}
\end{figure}
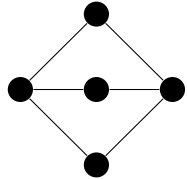

\begin{lemma}
Let $G=\Theta_{2,2,2}$. The number of non-edges $m^{c}_{N}$ in each level of the blow-up graph $G_{N}$ is given by
\begin{equation*}
       m_{N}^{c} = 4\cdot5^{N}\sum^{N}_{i=0}5^{i}=5^{N}(5^{N+1}-1).
\end{equation*}
\label{lemma2}
\end{lemma}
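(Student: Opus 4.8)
The plan is to follow the pattern of the proof of Lemma~\ref{lemma1}, but to work directly with a recurrence for $m^c_N$ rather than routing everything through edge counts. First I would settle the base case $N=0$: the theta graph $\Theta_{2,2,2}$ has $5$ vertices and $6$ edges (the two degree-$3$ ``hub'' vertices are each joined to all three degree-$2$ vertices), so it has $\binom{5}{2}-6 = 4$ non-edges, namely the hub pair together with the three pairs of internal vertices. This matches $4\cdot 5^{0}\sum_{i=0}^{0}5^{i} = 5^{0}(5^{1}-1) = 4$.

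Next I would establish the recurrence $m^c_N = 5\,m^c_{N-1} + 4\cdot 5^{2N}$. Since $G_N = G[G_{N-1}]$ is assembled from $5$ blobs, each isomorphic to $G_{N-1}$, every pair of vertices of $G_N$ either lies inside a single blob or is split between two distinct blobs. A pair inside one blob is a non-edge exactly when it is a non-edge of that copy of $G_{N-1}$, contributing $5\,m^c_{N-1}$ non-edges. For a pair split between two blobs, the definition of graph composition says the two vertices are adjacent if and only if the two blobs are adjacent in $\Theta_{2,2,2}$; so such a pair is a non-edge precisely when the two blobs form one of the $4$ non-adjacent blob-pairs, and each such blob-pair accounts for $n_{N-1}^{2} = (5^{N})^{2}$ vertex pairs, using $n_{N-1} = 5^{N}$. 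Adding the two contributions gives the stated recurrence.

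Finally I would solve it. Unrolling, $m^c_N = 5^{N} m^c_0 + 4\sum_{k=1}^{N}5^{N-k}5^{2k} = 4\cdot 5^{N} + 4\cdot 5^{N}\sum_{k=1}^{N}5^{k} = 4\cdot 5^{N}\sum_{i=0}^{N}5^{i}$, and the geometric-series identity $\sum_{i=0}^{N}5^{i} = (5^{N+1}-1)/4$ turns this into $m^c_N = 5^{N}(5^{N+1}-1)$; alternatively one checks by induction that both closed forms satisfy the recurrence and the base case, exactly as in Lemma~\ref{lemma1}. I do not anticipate a real obstacle: the one place to be careful is the cross-blob case of the split, where one must confirm that adjacency of two vertices in different blobs is governed solely by adjacency of the blobs in $\Theta_{2,2,2}$, and that $\Theta_{2,2,2}$ has exactly $4$ non-adjacent vertex pairs.
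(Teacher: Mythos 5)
Your proof is correct, and it takes a slightly different (dual) route from the paper's. The paper works through the complement at every step: it writes $m^{c}_{N+1}=\binom{5^{N+2}}{2}-|E(G_{N+1})|$ and counts \emph{edges}, using the fact that the $6$ adjacent blob pairs each contribute $(5^{N})^{2}$ cross-blob edges, which yields the closed form $|E(G_{N})|=6\cdot 5^{N}\sum_{i=0}^{N}5^{i}$ before subtracting. You instead count \emph{non-edges} directly via the recurrence $m^{c}_{N}=5\,m^{c}_{N-1}+4\cdot 5^{2N}$, where the $4$ comes from the four non-adjacent blob pairs of $\Theta_{2,2,2}$ (the hub pair and the three pairs of degree-$2$ vertices); this is the exact complement of the paper's ``$6$ adjacent pairs'' count, and the two bookkeepings agree since $4+6=\binom{5}{2}$. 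Your version is arguably cleaner: it never needs $\binom{5^{N+1}}{2}$, the first closed form $4\cdot 5^{N}\sum_{i=0}^{N}5^{i}$ falls straight out of unrolling the recurrence, and the one point requiring care --- that adjacency of vertices in distinct blobs is governed solely by adjacency of the blobs in $G_{0}$ --- is exactly the definition of the composition $G[G_{N-1}]$, which you correctly invoke. Both arguments are complete and give the same identity, so either is acceptable.
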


\begin{proof}
We prove the equation by induction. \\
\textbf{Base case}: When $N=0$, there are $m^{c}_{0}= 5^{0}(5^{1}-1) = 4$ non-edges which is precisely the number of non-edges in a $\Theta_{2,2,2}$. \\
\noindent
Assume the induction hypothesis that for a particular $N$, the single case $n = N$ holds, that is,
\begin{align*}
m_{N}^{c} &= 4\cdot5^{N}\sum^{N}_{i=0}5^{i}=5^{N}(5^{N+1}-1).\\
\intertext{The number of non-edges in $G_{N+1}$ is}
m_{N+1}^{c} &= \binom{5^{N+2}}{2} - |E(G_{N+1})| \\
\intertext{The number of edges in $G_{N}$ is calculated as follows: There are 5 vertices and 6 edges in $\Theta_{2,2,2}$. At the level $N$ blow-up, there are $6\cdot 5^{N}$ edges in each blob, and also $6\cdot 5^{N+1}$ edges between blobs, thus giving the term $|E(G_{N})|=6\cdot 5^{N}\mathlarger{\sum}_{i=0}^{N} 5^{i}$. Thus,}
m_{N+1}^{c}&= \binom{5^{N+2}}{2} - 6\cdot 5^{N+1} \mathlarger{\sum}_{i=0}^{N+1} 5^{i} \\
&= \dfrac{5^{N+2}(5^{N+2}-1)}{2} - \dfrac{(6\cdot 5^{N+1})(5^{N+2}-1)}{4} \\
&= 5^{N+1}(5^{N+2}-1).
\end{align*}
Since both the base case and inductive step has been proven as true, thus by mathematical induction $m^{c}_{N}$ holds for all $N$.
\end{proof}

\noindent
We state the following theorem. 

\begin{theorem}
The nested blow-up graph of a $\Theta_{2,2,2}$ has precisely $T_{N}= \frac{5^{N}}{1240}(6300\times 5^{3N}-2945\times5^{2N}+372\times5^{N}-3877)$ induced subgraphs isomorphic to $C_{4}$.
\end{theorem}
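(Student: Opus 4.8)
The plan is to mirror the structure of the $C_4$-blow-up proof exactly, adapting every step to the combinatorics of $\Theta_{2,2,2}$. First I would set up the analogue of the recurrence for $T_N$. Here $G_0 = \Theta_{2,2,2}$ has $5$ vertices, so $G_N$ consists of $5$ blobs (each a copy of $G_{N-1}$ on $5^N$ vertices), joined according to the $6$ edges and $4$ non-edges of $\Theta_{2,2,2}$. An induced $C_4$ in $G_N$ either lies entirely inside one blob (contributing $5\,T_{N-1}$), or is split across two, three, or four blobs; for the split cases I would classify the induced $C_4$s of $\Theta_{2,2,2}$ itself — crucially $\Theta_{2,2,2}$ contains induced $4$-cycles (each pair of the three length-$2$ paths forms one), unlike $C_4$ where no $C_4$ lives on fewer than $4$ vertices in the base pattern in the same way. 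So the recurrence will have the shape $T_N = 5\,T_{N-1} + a\,(5^N)^4 + b\,m^c_{N-1}(5^N)^2 + c\,(m^c_{N-1})^2 + (\text{possible }(5^N)^3\text{ and }(5^N)^2\text{ terms from induced }C_4\text{'s of the base graph spanning }3\text{ or }4\text{ blobs})$, where the constants $a,b,c,\dots$ count, in $\Theta_{2,2,2}$, the relevant configurations: $a$ = number of induced $C_4$s on $4$ distinct vertices (i.e. choosing one vertex per blob and getting an induced $4$-cycle in the quotient), $b$ = number of (ordered appropriately) choices of one "non-edge blob" together with two further blobs forming the pattern of an induced $C_4$-minus-a-vertex-expansion, and $c$ = number of ways to pick two blobs each supplying a non-edge so the blob-pattern plus the two doubled vertices induces a $C_4$. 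I would determine these constants by direct inspection of Figure~\ref{fig:Theta}, exactly as the authors did for $C_4$ (where $a=1$, $b=4$, $c=4$).

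Next, once the recurrence is pinned down, I would solve it by the same telescoping/unrolling technique used above: write $T_N = Q_N + R_N + S_N + (\text{extra sums})$, where $Q_N = 5^{N}\sum_{i=0}^{N} 5^{3i}$ captures the "all-blobs-distinct-vertices" contribution, $R_N = \sum_{i=1}^N(\text{const}\cdot 5^{\,\cdot}\, m^c_{i-1})$ the "one non-edge" contribution, and $S_N = \sum_{i=1}^N(\text{const}\cdot 5^i (m^c_{N-i})^2)$ the "two non-edges" contribution, plus analogous sums for any $(5^N)^3$-type terms. Each of these is evaluated by substituting the closed form $m^c_k = 5^k(5^{k+1}-1)$ from Lemma~\ref{lemma2}, expanding, splitting into geometric series in $5^{3i}, 5^{2i}, 5^i$, and summing via $\sum_{i} 5^{ki} = (5^{k(N+1)}-1)/(5^k-1)$. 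Then I would add the pieces over a common denominator and simplify to the claimed $\frac{5^N}{1240}(6300\cdot 5^{3N} - 2945\cdot 5^{2N} + 372\cdot 5^N - 3877)$. As a sanity check I would verify the base case $N=0$: the formula should give $T_0 = \frac{1}{1240}(6300 - 2945 + 372 - 3877) = \frac{-150}{1240}$ — which is negative, so I would in fact expect $T_0$ to equal the number of induced $C_4$s in $\Theta_{2,2,2}$ (namely $3$), and I would re-examine the constant term; more likely the intended evaluation base is $N$ large enough that the recurrence kicks in, or the formula is stated for $N\ge 1$ with $T_0=3$ handled separately, so checking $N=1$ by brute force against the recurrence would be the real validation step.

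The main obstacle I anticipate is correctly enumerating the cross-blob configurations, i.e. getting the constants $a,b,c$ (and deciding whether genuine $(5^N)^3$ terms appear). Because $\Theta_{2,2,2}$ is not vertex-transitive (two degree-$3$ vertices, three degree-$2$ vertices) and does contain induced $4$-cycles, the bookkeeping is more delicate than for $C_4$: I must distinguish induced from non-induced $4$-cycles in every "quotient pattern" obtained by contracting each blob to a point, and for each such pattern count how many ways the chosen vertices/non-edges inside the participating blobs still yield an \emph{induced} $C_4$ in $G_N$ (edges between adjacent blobs are complete, so a non-edge of $G_N$ lying across two blobs is impossible — all "missing edges" needed by an induced $C_4$ must be internal non-edges of a single blob, which is exactly why $m^c_{N-1}$ appears). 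I would handle this by a careful case split on the number of blobs involved ($1,2,3,4$) and, within each, on which induced subgraph of $\Theta_{2,2,2}$ the blobs realise, reading the counts off Figure~\ref{fig:Theta}; the remaining algebra, while lengthy, is mechanical and parallels the proof of the previous theorem verbatim.
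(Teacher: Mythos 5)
Your plan is essentially the paper's own proof: the authors set up exactly the recurrence you describe, $T_{N}=5T_{N-1}+3(5^{N})^{4}+9\,m^{c}_{N-1}(5^{N})^{2}+6\,(m^{c}_{N-1})^{2}$ with $T_{0}=3$, and then unroll it into three sums $Q_{N},R_{N},S_{N}$ evaluated by geometric series via Lemma~\ref{lemma2}. The constants you leave open come out as $a=3$ (the three $4$-subsets of blobs inducing a $C_4$ in $\Theta_{2,2,2}$), $c=6$ (adjacent blob pairs, one non-edge each), and $b=9$ (a non-edge in a blob $B$ plus one vertex in each of two mutually non-adjacent neighbours of $B$); no $(5^{N})^{3}$ term arises, since a $3{+}1$ split across blobs forces a vertex of degree $0$ or $3$ and a $1{+}1{+}1$ split has only three vertices, so the only blob distributions of the four vertices are $4$, $2{+}2$, $2{+}1{+}1$, and $1{+}1{+}1{+}1$. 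Your sanity check is also a genuine catch: the constant $-3877$ in the stated formula is wrong (it yields $T_{0}=-150/1240$ and a non-integer at $N=1$); the paper's own derivation ends with $-7$ in that position, and that version correctly gives $T_{0}=3$ and $T_{1}=2886$, matching the recurrence. So the statement as printed cannot be proved; completing your plan yields the corrected constant.
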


\begin{proof}
First, we will show that,
$$T_{N}=\begin{cases}5\times(T_{N-1})+3\times(5^{N})^{4}+6\times(m^{c}_{N-1})^{2}+9\times m^{c}_{N-1}\times(n_{N-1})^{2}, &n>0\\
3 &n=0.\end{cases}$$ Since $G_{0}=\Theta_{2,2,2}$, we have $T_{0}=3$.  We show that we can obtain $T_{N}$ from $T_{N-1}$ and prove each term from $T_{N}$ respectively. 

Note that at level $N$ blow-up, we replace each of the $5^{N+1}$ vertices in $G_{N-1}$ with $G_{0}$, this contributes to $5\times T_{N-1}$ induced copies of $C_{4}$ at $G_{N}$. 
Thus giving the first term $5\times T_{N-1}$.

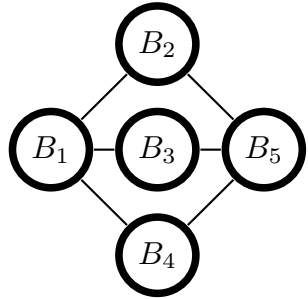
\begin{figure}[H]
\centering
\captionsetup{justification=centering} 
\begin{tikzpicture}
  [scale=.7,auto=left,every node/.style={circle,draw=black}]
   \node[scale=1.2,line width=1mm] (n0) at (2,4) {$B_{1}$};
   \node[scale=1.2,line width=1mm] (n1) at (4,6) {$B_{2}$};
   \node[scale=1.2,line width=1mm] (n2) at (4,4) {$B_{3}$};
   \node[scale=1.2,line width=1mm] (n3) at (4,2) {$B_{4}$};
   \node[scale=1.2,line width=1mm] (n4) at (6,4) {$B_{5}$};
   
   \foreach \from/\to in {n0/n1,n0/n2,n0/n3,n4/n1,n4/n2,n4/n3}
   \draw[thick] (\from) -- (\to);
  
\end{tikzpicture}
\caption{The $\Theta_{2,2,2}$ graph containing 5 blobs, with each blob labelled respectively.} 
\label{fig:Theta with blobs}
\end{figure}

We know that $G_{N}$ has five blobs isomorphic to $G_{N-1}$. For simplicity of the rest of the proof, we will refer to the blobs as labelled $B_{1},B_{2},B_{3},B_{4}$ and $B_{5}$, see Figure \ref{fig:Theta with blobs}.

We select 4 of these 5 blobs and one vertex from each blob.  There are five ways to select 4 blobs, but only 3 of these induce copies of $C_{4}$. Each of these three combinations of blobs contribute to 3 $C_{4}$ in $G_{N}$. There are $5^{N}$ vertices in each $G_{N-1}$ which gives $3\times(5^{N})^{4}$ choices. This results in the second term $3\times(5^{N})^{4}$. 

For the third term $6\times (m^{c}_{N-1})^{2}$, we choose 1 blob. We choose a non-edge from this blob and another non-edge from any adjacent blob. There are six pairs of blobs that are adjacent, thus resulting in the term $6\times (m^{c}_{N-1})^{2}$. 

Finally, we pick a non-edge in a blob $B$ and two vertices, one from each of two different blobs each adjacent to $B$.  There are nine ways that we can select these (refer to Figure \ref{fig:Theta with blobs}), namely, $\{B_{1},B_{2},B_{5}\}$, $\{B_{1},B_{3},B_{5}\}$, $\{B_{1},B_{4},B_{5}\}$, $\{B_{1},B_{2},B_{3}\}$, $\{B_{1},B_{2},B_{4}\}$, $\{B_{1},B_{3},B_{4}\}$, $\{B_{2},B_{3},B_{5}\}$, $\{B_{2},B_{4},B_{5}\}$, and $\{B_{3},B_{4},B_{5}\}$.

Each combination has $m^{c}_{N-1}$ choices of a non-edge from one blob and then a choice of a vertex from the $n_{N-1}$ vertices in each of the other two blobs. This results in the term $9\times m^{c}_{N-1}\times(n_{N-1})^{2}$.  
\noindent
We now expand and simplify $T_{N}$ to find the recurrence relation for $T_{N}$. Expanding the first few terms, we obtain:

\begin{align*}
    T_{0}={} & 3 \\
    T_{1}={} & 5(T_{0})+3\times 5^{4}+6\times(m_{0}^{c})^{2}+9\times m_{0}^{c}\times 5^{2} 
    = {}  3\cdot(5+5^{4})+6\times(m_{0}^{c})^{2}+9\times5^{2}\times m_{0}^{c} \\
    = {} & 3\cdot 5^{1}\times \mathlarger{\sum}_{i=0}^{1} 5^{3i} + 6\times \mathlarger{\sum}_{i=1}^{1}(5^{i-1}\times(m_{1-i}^{c})^{2}) + 9\times \mathlarger{\sum}_{i=1}^{1} (5^{1+i}\times m_{i-1}^{c})\\
    T_{2}={} & 5(T_{1})+3\times 5^{8}+6\times(m_{1}^{c})^{2}+9\times m_{1}^{c}\times 5^{4} \\
    ={} & 5\left(3\cdot 5^{1} \times \mathlarger{\sum}_{i=0}^{1}5^{3i} + 6\times \mathlarger{\sum}_{i=1}^{1}(5^{i-1}\times(m_{1-i}^{c})^{2}) + 9\times \mathlarger{\sum}_{i=1}^{1} (5^{1+i}\times m_{i-1}^{c})\right)+3\times 5^{8}+6\times(m_{1}^{c})^{2}+9\times m_{1}^{c}\times 5^{4} \\
    ={} & \left(3\cdot 5^{2}\mathlarger{\sum}_{i=0}^{1}5^{3i}+3\cdot5^{8}\right) +\left( 6\cdot 5\mathlarger{\sum}_{i=1}^{1}(5^{i-1}\times(m_{1-i}^{c})^{2})+6\cdot(m_{1}^{c})^{2}\right) \\
    {} & +\left( 9\cdot5 \mathlarger{\sum}_{i=1}^{1} (5^{1+i}\times m_{i-1}^{c})+9\cdot m_{1}^{c}\cdot 5^{4})\right) \\
    = {} & 3\cdot 5^{2} \times \mathlarger{\sum}_{i=0}^{2}5^{3i} + 6\times \mathlarger{\sum}_{i=1}^{2}(5^{i-1}\times(m_{2-i}^{c})^{2}) + 9\times \mathlarger{\sum}_{i=1}^{2} (5^{2+i}\times m_{i-1}^{c}) \\
    T_{3}={} & 5(T_{2})+3\times 5^{12}+6\times(m_{2}^{c})^{2}+9\times m_{2}^{c}\times 5^{6} \\
    ={} & 5\left(3\cdot 5^{2} \times \mathlarger{\sum}_{i=0}^{2}5^{3i} + 6\times \mathlarger{\sum}_{i=1}^{2}(5^{i-1}\times(m_{2-i}^{c})^{2}) + 9\times \mathlarger{\sum}_{i=1}^{2} (5^{2+i}\times m_{i-1}^{c})\right)+3\times 5^{12}+6\times(m_{2}^{c})^{2}+9\times m_{2}^{c}\times 5^{6} \\
     ={} & \left(3\cdot 5^{3}\mathlarger{\sum}_{i=0}^{2}5^{3i}+3\cdot 5^{12}\right) + \left(6\cdot 5 \mathlarger{\sum}_{i=1}^{2}(5^{i-1}\times(m_{2-i}^{c})^{2})+6\cdot(m_{2}^{c})^{2}\right)\\
     {} & +\left(9\cdot5 \mathlarger{\sum}_{i=1}^{2} (5^{2+i}\times m_{i-1}^{c})+9\cdot m_{2}^{c}\cdot 5^{6}\right) \\
    = {} & 3\cdot 5^{3} \times \mathlarger{\sum}_{i=0}^{3}5^{3i}+ 6\times \mathlarger{\sum}_{i=1}^{3}(5^{i-1}\times(m_{3-i}^{c})^{2}) + 9\times \mathlarger{\sum}_{i=1}^{3} (5^{3+i}\times m_{i-1}^{c})\\
     \vdots \\
     T_{N} = {} & \underbrace{3\cdot 5^{N} \times \mathlarger{\sum}_{i=0}^{N}5^{3i}}_{Q_{N}} + \underbrace{6\times \mathlarger{\sum}_{i=1}^{N}(5^{i-1}\times(m_{N-i}^{c})^{2})}_{R_{N}} + \underbrace{9\times \mathlarger{\sum}_{i=1}^{N} (5^{N+i}\times m_{i-1}^{c})}_{S_{N}}.
\end{align*}
\noindent
We simplify for each $Q_{N}, R_{N}$ and $S_{N}$.
\noindent
Simplifying using geometric sum,
\begin{align}
     Q_{N} &= 3\cdot 5^{N}\times \mathlarger{\sum}_{i=0}^{N} 5^{3i} 
         = 3\cdot 5^{N} \times \dfrac{5^{3}(5^{3N}-1)}{5^{3}-1}
       = \dfrac{3\cdot 5^{N}}{124}(125\times 5^{3N}-1). 
\intertext{Using Lemma \ref{lemma2},}
     R_{N} &= 6\times \mathlarger{\sum}_{i=1}^{N}(5^{i-1}\times(m_{N-i}^{c})^{2}) \nonumber \\
    &= 6 \times \mathlarger{\sum}_{i=1}^{N}(5^{i-1}\times (5^{N-i}(5^{N-i+1}-1))^{2}) \nonumber \\
     &= 6\cdot5^{N}\left(5\cdot\mathlarger{\sum}_{i=1}^{N} 5^{3(N-i)} -2\cdot\mathlarger{\sum}_{i=1}^{N} 5^{2(N-i)}+\frac{1}{5}\cdot \mathlarger{\sum}_{i=1}^{N} 5^{N-i}\right). \nonumber \\
\intertext{Again, simplify using geometric sum,}
     R_{N} &= 6\cdot 5^{N}\left(5\times\frac{5^{3N}-1}{5^{3}-1}-2\times\frac{5^{2N}-1}{5^{2}-1}+\frac{1}{5}\times\frac{5^{N}-1}{5-1}\right) \nonumber \\
     &= \dfrac{5^{N}}{620}(150\times 5^{3N}-310\times5^{2N}+186\times 5^{N}-26). \\
\intertext{Lastly,}
     S_{N} &= 9\times \mathlarger{\sum}_{i=1}^{N} (5^{N+i}\times m_{i-1}^{c}). \nonumber \\
\intertext{By Lemma \ref{lemma2},}
    S_{N} &= 9\times \mathlarger{\sum}_{i=1}^{N} (5^{N+i}\times (5^{i-1}(5^{i}-1))) 
    = 9\cdot 5^{N} \times \mathlarger{\sum}_{i=1}^{N} (5^{3i-1}-5^{2i-1})
   = \dfrac{9\cdot 5^{N}}{5} \times \left(\mathlarger{\sum}_{i=1}^{N} 5^{3i}- \mathlarger{\sum}_{i=1}^{N} 5^{2i}\right). \nonumber \\
    \intertext{Simplify using geometric sum,}
    S_{N} &= \dfrac{9\cdot 5^{N}}{5} \times \left(\dfrac{5^{3}(5^{3N}-1)}{5^{3}-1}- \dfrac{5^{2}(5^{2N}-1)}{5^{2}-1}\right)
   = \dfrac{3\cdot5^{N}}{1240}(750\times5^{3N}-775\times 5^{2N}+25).
\end{align}
\noindent
Thus,
\begin{align}
    T_{N}={} & \dfrac{3\cdot 5^{N}}{124}(125\times 5^{3N}-1) + \dfrac{5^{N}}{620}(150\times 5^{3N}-310\times5^{2N}+186\times 5^{N}-26) \nonumber \\
     & + \dfrac{3\cdot5^{N}}{1240}(750\times5^{3N}-775\times 5^{2N}+25) \nonumber \\
     ={} & \frac{5^{N}}{1240}(6300\times 5^{3N}-2945\times5^{2N}+372\times5^{N}-7).
\end{align}

\end{proof}

\section{Conclusion}
In this paper, we gave exact counts of $C_{4}$s in two different graph structures: (i) the nested blow-up graphs of $C_{4}$s and (ii) the theta graph $\theta_{2,2,2}$. Previously, only bounds were found for the blow-ups of graphs \cite{pippenger75}. We improved the bounds to give the exact counts of such graphs. In a general case, we can adapt a similar formula to construct the equations for blow-up graphs of higher order $k$s, to find the exact counts of cycles of higher order $k$.  Future direction of this work could include finding a generalised formula for any types of blow-up graphs, with \emph{some} cyclic property.

\bibliographystyle{plainnat}
\bibliography{main}

\end{document}